\newtheorem{theorem}{Theorem}[section]
\newtheorem{corollary}{Corollary}
\newtheorem{definition}[theorem]{Definition}
\newtheorem{remark}{Remark}
\newtheorem{example}{Example}
\newcommand{\R}{\mathbb{R}}
\begin{document}

\title{The generalized natural boundary conditions\\
for fractional variational problems\\
in terms of the Caputo derivative\thanks{Accepted (19 February 2010) 
for publication in \emph{Computers and Mathematics with Applications}.}}

\author{Agnieszka B. Malinowska$^{1, 2}$\\
{\tt abmalinowska@ua.pt} \and
Delfim F. M. Torres$^1$\\
{\tt delfim@ua.pt}}

\date{$^1$Department of Mathematics,
University of Aveiro,
3810-193 Aveiro, Portugal\\[0.3cm]
$^2$Faculty of Computer Science,
Bia{\l}ystok University of Technology,\\
15-351 Bia\l ystok, Poland}

\maketitle


\begin{abstract}
This paper presents necessary and sufficient optimality conditions
for problems of the fractional calculus of variations with a
Lagrangian depending on the free end-points. The fractional
derivatives are defined in the sense of Caputo.

\bigskip

\noindent \textbf{Keywords}: fractional Euler--Lagrange equation;
fractional derivative; Caputo derivative.

\smallskip

\noindent \textbf{Mathematics Subject Classification}: 49K05; 26A33.
\end{abstract}


\section{Introduction}

Fractional calculus is one of the generalizations of the classical
calculus and it has been used successfully in various fields of
science and engineering --- see, \textrm{e.g.},
\cite{debnath,Diethelm,ferreira,hilfer,hilfer2,%
kulish,magin,metzler,miller,Oustaloup,TM,Zas}.
In recent years, there has been a growing interest in the area of
fractional variational calculus and its applications
\cite{agrawal2,agrawalCap,CD:Agrawal:2007,R:A:D:10,Almeida2,Atanackovic,%
Baleanu:Agrawal,Baleanu,El-Nabulsi:Torres07,R:T:08,%
Frederico:Torres07,Frederico:Torres08,G:D:10,klimek,muslih,rie}.
Applications include classical and quantum mechanics,
field theory, and optimal control. In the papers
cited above, the problems have been formulated mostly in terms of
two types of fractional derivatives, namely Riemann--Liouville and
Caputo. The natural boundary conditions for fractional variational
problems, in terms of the Riemann--Liouville and the Caputo
derivative, are presented in \cite{agrawal2,agrawalCap}.
Here we develop further the theory by proving necessary optimality
conditions for more general problems of the fractional calculus of
variations with a Lagrangian that may also depend on the unspecified
end-points $y(a)$, $y(b)$. More precisely, the problem under our
study consists to minimize a functional which is defined in terms of the
Caputo derivative and having no constraint on $y(a)$ and/or $y(b)$.
The novelty is the dependence of the integrand $L$ on the free
end-points $y(a)$, $y(b)$. This class of problems is motivated
by applications in the field of economics \cite{JP:DT:AZ}.

The paper is organized as follows. Section~\ref{sec2} presents the
necessary definitions and concepts of the fractional calculus; our
results are formulated, proved, and illustrated through an example
in Section~\ref{main}. Main results of the paper include necessary
optimality conditions with the new natural boundary conditions
(Theorem~\ref{Theo E-L1}) that become sufficient under appropriate
convexity assumptions (Theorem~\ref{suff}).


\section{Fractional Calculus}
\label{sec2}

In this section we review the necessary definitions and facts from
fractional calculus. For more on the subject we refer the reader to
\cite{kilbas,Oldham,Podlubny,samko}.

Let $f\in L_1([a,b])$  and $0<\alpha<1$. We begin with the left and
the right Riemann--Liouville Fractional Integrals (RLFI) of order
$\alpha$ of function $f$ which are defined as: the left RLFI
\begin{equation}
\label{RLFI1}
{_aI_x^\alpha}f(x)=\frac{1}{\Gamma(\alpha)}\int_a^x
(x-t)^{\alpha-1}f(t)dt,\quad x\in[a,b],
\end{equation}
the right RLFI
\begin{equation}
\label{RLFI2}
{_xI_b^\alpha}f(x)=\frac{1}{\Gamma(\alpha)}\int_x^b(t-x)^{\alpha-1}
f(t)dt,\quad x\in[a,b],
\end{equation}
where $\Gamma(\cdot)$ represents the Gamma function. Moreover,
${_aI_x^0}f={_xI_b^0}f=f$ if $f$ is a continuous function.

Let $f\in AC([a,b])$, where $AC([a,b])$ represents the space
of absolutely continuous functions on $[a,b]$. Then using equations
\eqref{RLFI1} and \eqref{RLFI2}, the left and the right
Riemann--Liouville and Caputo derivatives are defined as: the left
Riemann--Liouville Fractional Derivative (RLFD)
\begin{equation}\label{RLFD1}
{_aD_x^\alpha}f(x)=\frac{1}{\Gamma(1-\alpha)}\frac{d}{dx}\int_a^x
(x-t)^{-\alpha}f(t)dt=\frac{d}{dx}{_aI_x^{1-\alpha}}f(x),\quad
x\in[a,b],
\end{equation}
the right RLFD
\begin{equation}
\label{RLFD2}
{_xD_b^\alpha}f(x)=\frac{-1}{\Gamma(1-\alpha)}\frac{d}{dx}\int_x^b
(t-x)^{-\alpha}
f(t)dt=\left(-\frac{d}{dx}\right){_xI_b^{1-\alpha}}f(x),\quad
x\in[a,b],
\end{equation}
the left Caputo Fractional Derivative (CFD)
\begin{equation}
\label{CFD1}
{^C_aD_x^\alpha}f(x)=\frac{1}{\Gamma(1-\alpha)}\int_a^x
(x-t)^{-\alpha}\frac{d}{dt}f(t)dt={_aI_x^{1-\alpha}}\frac{d}{dx}f(x),
\quad x\in[a,b],
\end{equation}
the right CFD
\begin{equation}
\label{CFD2}
{_xD_b^\alpha}f(x)=\frac{-1}{\Gamma(1-\alpha)}\int_x^b
(t-x)^{-\alpha}
\frac{d}{dt}f(t)dt={_xI_b^{1-\alpha}}\left(-\frac{d}{dx}\right)f(x),
\quad x\in[a,b],
\end{equation}
where $\alpha$ is the order of the derivative.

The operators \eqref{RLFI1}--\eqref{CFD2} are obviously linear. We
now present the rule of fractional integration by parts for RLFI
(see for instance \cite{int:partsRef}). Let $0<\alpha<1$, $p\geq1$,
$q \geq 1$, and $1/p+1/q\leq1+\alpha$. If $g\in L_p([a,b])$ and
$f\in L_q([a,b])$, then
\begin{equation}
\label{ipi}
\int_{a}^{b} g(x){_aI_x^\alpha}f(x)dx
=\int_a^b f(x){_x I_b^\alpha} g(x)dx.
\end{equation}
In the discussion to follow, we will also need the following
formulae for fractional integrations by parts:
\begin{equation}
\label{ip}
\begin{split}
\int_{a}^{b}  g(x) \, {^C_aD_x^\alpha}f(x)dx &=\left.f(x){_x
I_b^{1-\alpha}} g(x)\right|^{x=b}_{x=a}+\int_a^b f(x){_x D_b^\alpha}
g(x)dx,\\
\int_{a}^{b}  g(x) \, {^C_xD_b^\alpha}f(x)dx &=\left.-f(x){_a
I_x^{1-\alpha}} g(x)\right|^{x=b}_{x=a}+\int_a^b f(x){_a D_x^\alpha}
g(x)dx.
\end{split}
\end{equation}

They can be derived using equations \eqref{RLFD1}--\eqref{CFD2},
the identity \eqref{ipi} and performing integration by parts.


\section{Main Results}
\label{main}

Let us consider the following problem:
\begin{equation}
\label{Funct1}
\begin{gathered}
\mathcal{J}(y)=\int_a^b L(x,y(x),{^C_a D_x^{\alpha}}
y(x),{^C_xD_b^\beta}y(x), y(a),y(b)) \, dx \longrightarrow \text{extr}\\
\quad (y(a)=y_{a}), \quad (y(b)=y_{b}).
\end{gathered}
\end{equation}
Using parentheses around the end-point conditions means that the
conditions may or may not be present. We assume that:
\begin{itemize}
\item[(i)]
$L(\cdot,\cdot,\cdot,\cdot,\cdot,\cdot) \in
C^1([a,b]\times\mathbb{R}^5; \mathbb{R})$;
\item[(ii)] $x\to \partial_3
L(x,y(x),{^C_a D_x^{\alpha}} y(x),{^C_xD_b^\beta}y(x), y(a),y(b))$
has continuous right RLFI of order $1-\alpha$ and right RLFD of
order $\alpha$, where $\alpha\in(0,1)$;
\item[(iii)] $x\to\partial_4 L(x,y(x),{^C_a D_x^{\alpha}}
y(x),{^C_xD_b^\beta}y(x), y(a),y(b))$ has continuous left RLFI of
order $1-\beta$ and left RLFD of order $\beta$, where
$\beta\in(0,1)$.
\end{itemize}
\begin{remark}
We are assuming that the admissible functions $y$ are such that
${^C_a D_x^{\alpha}} y(x)$ and ${^C_xD_b^\beta}y(x)$ exist on the
closed interval $[a,b]$.
\end{remark}
Along the work we denote by $\partial_iL$, $i=1,\ldots,6$, the
partial derivative of function
$L(\cdot,\cdot,\cdot,\cdot,\cdot,\cdot)$ with respect to its
$i$th argument.


\subsection{Necessary Optimality Conditions}
\label{sec3}

Next theorem gives necessary optimality conditions for the problem
\eqref{Funct1}.

\begin{theorem}
\label{Theo E-L1}
Let $y$ be a local extremizer to problem \eqref{Funct1}.
Then, $y$ satisfies the fractional Euler--Lagrange equation
\begin{multline}
\label{E-L1}
\partial_2L(x,y(x),{^C_a D_x^{\alpha}}
y(x),{^C_xD_b^\beta}y(x), y(a),y(b))+{_x D_b^{\alpha}}
\partial_3L(x,y(x),{^C_a D_x^{\alpha}} y(x),{^C_xD_b^\beta}y(x), y(a),y(b))\\
+ {_aD_x^\beta}\partial_4L(x,y(x),{^C_a D_x^{\alpha}}
y(x),{^C_xD_b^\beta}y(x), y(a),y(b)) = 0
\end{multline}
for all $x\in[a,b]$. Moreover, if $y(a)$ is not specified, then
\begin{multline}
\label{new:bca}
\int_a^b \partial_5L(x,y(x),{^C_a D_x^{\alpha}}
y(x),{^C_xD_b^\beta}y(x), y(a),y(b)) \, dx\\
= \left[{_x I_b^{1-\alpha}}
\partial_3L(x,y(x),{^C_a D_x^{\alpha}} y(x),{^C_xD_b^\beta}y(x), y(a),y(b))\right.\\
\left.\left.-{_aI_x^{1-\beta}}\partial_4L(x,y(x),{^C_a D_x^{\alpha}}
y(x),{^C_xD_b^\beta}y(x), y(a),y(b))\right]\right|_{x=a}
\end{multline}
if $y(b)$ is not specified, then
\begin{multline}
\label{new:bcb}
\int_a^b \partial_6L(x,y(x),{^C_a D_x^{\alpha}}
y(x),{^C_xD_b^\beta}y(x), y(a),y(b)) \, dx\\
= \left[{_aI_x^{1-\beta}}\partial_4L(x,y(x),{^C_a D_x^{\alpha}}
y(x),{^C_xD_b^\beta}y(x), y(a),y(b))\right.\\
\left.\left.-{_x I_b^{1-\alpha}}\partial_3L(x,y(x),{^C_a
D_x^{\alpha}} y(x),{^C_xD_b^\beta}y(x),
y(a),y(b))\right]\right|_{x=b}.
\end{multline}
\end{theorem}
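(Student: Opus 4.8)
The plan is to run the classical first-variation argument adapted to the fractional operators, and then to extract the Euler--Lagrange equation and the two natural boundary conditions by suitable choices of the variation. First I would take an admissible variation $h$, that is, a function for which $y+\varepsilon h$ is admissible for all sufficiently small $\varepsilon$; by linearity such an $h$ possesses the Caputo derivatives ${^C_aD_x^{\alpha}}h$ and ${^C_xD_b^{\beta}}h$ on $[a,b]$, it satisfies $h(a)=0$ whenever the constraint $y(a)=y_a$ is imposed and $h(b)=0$ whenever $y(b)=y_b$ is imposed, and $h(a)$, $h(b)$ are otherwise arbitrary. Setting $j(\varepsilon)=\mathcal{J}(y+\varepsilon h)$, the hypothesis that $y$ is a local extremizer gives $j'(0)=0$. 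Differentiating under the integral sign (legitimate by the smoothness assumption~(i) together with continuity on the compact interval $[a,b]$) and using linearity of the Caputo operators, so that ${^C_aD_x^{\alpha}}(y+\varepsilon h)={^C_aD_x^{\alpha}}y+\varepsilon\,{^C_aD_x^{\alpha}}h$ and likewise for ${^C_xD_b^{\beta}}$, I obtain the stationarity identity
\begin{multline*}
\int_a^b\Bigl[\partial_2L\cdot h(x)+\partial_3L\cdot{^C_aD_x^{\alpha}}h(x)+\partial_4L\cdot{^C_xD_b^{\beta}}h(x)\\
+\partial_5L\cdot h(a)+\partial_6L\cdot h(b)\Bigr]\,dx=0,
\end{multline*}
where each $\partial_iL$ is evaluated along $y$.

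The next step is to move the fractional derivatives off $h$ by the integration-by-parts rules \eqref{ip}; assumptions~(ii) and~(iii) supply precisely the regularity (continuous right RLFI and RLFD of $\partial_3L$, continuous left RLFI and RLFD of $\partial_4L$) under which those identities are applicable here. Since $h(a)$ and $h(b)$ are constants, the last two terms factor out of the integral as $h(a)\int_a^b\partial_5L\,dx$ and $h(b)\int_a^b\partial_6L\,dx$. Gathering the two boundary contributions produced by \eqref{ip} and regrouping, the stationarity identity takes the form
\begin{multline*}
\int_a^b h(x)\Bigl[\partial_2L+{_xD_b^{\alpha}}\partial_3L+{_aD_x^{\beta}}\partial_4L\Bigr]\,dx\\
+h(a)\left[\int_a^b\partial_5L\,dx-\left.\bigl({_xI_b^{1-\alpha}}\partial_3L-{_aI_x^{1-\beta}}\partial_4L\bigr)\right|_{x=a}\right]\\
+h(b)\left[\int_a^b\partial_6L\,dx+\left.\bigl({_xI_b^{1-\alpha}}\partial_3L-{_aI_x^{1-\beta}}\partial_4L\bigr)\right|_{x=b}\right]=0.
\end{multline*}

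Finally I would specialize the variation. Restricting first to $h$ with $h(a)=h(b)=0$ (these are always admissible), the two bracketed boundary terms disappear, leaving $\int_a^b h(x)\bigl[\partial_2L+{_xD_b^{\alpha}}\partial_3L+{_aD_x^{\beta}}\partial_4L\bigr]\,dx=0$ for all such $h$; since the coefficient in square brackets is continuous on $[a,b]$ (again by (i)--(iii)), the fundamental lemma of the calculus of variations forces it to vanish identically, which is the fractional Euler--Lagrange equation \eqref{E-L1}. With \eqref{E-L1} in hand the integral term vanishes for every admissible $h$, so $h(a)\bigl[\cdots\bigr]+h(b)\bigl[\cdots\bigr]=0$ for all such $h$. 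If $y(a)$ is not specified I would choose an admissible $h$ with $h(a)\ne 0$ and $h(b)=0$, which annihilates the coefficient of $h(a)$ and gives \eqref{new:bca}; symmetrically, if $y(b)$ is not specified, a variation with $h(b)\ne 0$ and $h(a)=0$ yields \eqref{new:bcb}. The only genuinely delicate points are the justification of differentiation under the integral sign and the applicability of the fractional integration-by-parts formulas \eqref{ip}, and these are exactly what hypotheses (i)--(iii) are designed to guarantee; everything else is bookkeeping with signs and endpoint evaluations.
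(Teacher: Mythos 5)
Your proposal is correct and follows essentially the same route as the paper: form the first variation, apply the fractional integration-by-parts formulae \eqref{ip}, invoke the fundamental lemma with $h(a)=h(b)=0$ to get \eqref{E-L1}, and then free $h(a)$ or $h(b)$ separately to obtain the natural boundary conditions, with all signs matching the paper's. No gaps.
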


\begin{proof}
Suppose that $y$ is an extremizer of $\mathcal{J}$. We can proceed
as Lagrange did, by considering the value of $\mathcal{J}$ at a
nearby function $\tilde{y}=y + \varepsilon h$, where $\varepsilon\in
\R$ is a small parameter, $h$ is an arbitrary admissible function.
We do not require $h(a)=0$ or $h(b)=0$ in case $y(a)$ or $y(b)$,
respectively, is free (it is possible that both are free). Let
\begin{equation*}
\varphi(\varepsilon) =\int_a^b L(x,y(x) + \varepsilon h(x),{^C_a
D_x^{\alpha}}(y(x) + \varepsilon h(x)),{^C_xD_b^\beta}(y(x) +
\varepsilon h(x)), y(a) + \varepsilon h(a),y(b) + \varepsilon h(b))
\, dx
\end{equation*}
Since ${^C_a D_x^{\alpha}}$ and ${^C_xD_b^\beta}$  are linear
operators, it follows that
\begin{equation*}
\begin{split}
{^C_a D_x^{\alpha}}(y(x) + \varepsilon h(x))&={^C_a
D_x^{\alpha}}y(x) + \varepsilon {^C_a D_x^{\alpha}}h(x)\\
{^C_xD_b^\beta}(y(x) + \varepsilon h(x))&={^C_xD_b^\beta}y(x) +
\varepsilon {^C_xD_b^\beta}h(x).
\end{split}
\end{equation*}
A necessary condition for $y$ to be an extremizer is given by
\begin{multline}
\label{eq:FT}
\left.\frac{d \varphi}{d\varepsilon}\right|_{\varepsilon=0} = 0\\
\Leftrightarrow \int_a^b \Bigl[
\partial_2L(\cdots)h(x)+\partial_3L(\cdots){^C_a D_x^{\alpha}}h(x)
+\partial_4L(\cdots){^C_xD_b^\beta}h(x)
+\partial_5L(\cdots)h(a)+\partial_6L(\cdots)h(b)\Bigl]dx\\
= 0 \, ,
\end{multline}
where $(\cdots) = \left(x,y(x),{^C_a D_x^{\alpha}}
y(x),{^C_xD_b^\beta}y(x), y(a),y(b)\right)$. Using formulae
\eqref{ip} for integration by parts, the second and the third
integral can be written as
\begin{equation}
\label{IP}
\begin{split}
\int_a^b\partial_3L(\cdots){^C_a
D_x^{\alpha}}h(x)dx&=\int_a^bh(x){_x
D_b^{\alpha}}\partial_3L(\cdots)dx+\left.{_x
I_b^{1-\alpha}}\partial_3L(\cdots)h(x)\right|^{x=b}_{x=a},\\
\int_a^b\partial_4L(\cdots){^C_xD_b^\beta}h(x)dx&=\int_a^bh(x){_a
D_x^{\beta}}\partial_4L(\cdots)dx-\left.{_a
I_x^{1-\beta}}\partial_4L(\cdots)h(x)\right|^{x=b}_{x=a}.
\end{split}
\end{equation}
Substituting \eqref{IP} into \eqref{eq:FT}, we get
\begin{multline}
\label{eq:aft:IP}
\int_a^b \Bigl[
\partial_2L(\cdots)+{_x
D_b^{\alpha}}\partial_3L(\cdots)+{_a
D_x^{\beta}}\partial_4L(\cdots)\Bigl]h(x)\\
+\left.{_x I_b^{1-\alpha}}\partial_3L(\cdots)h(x)\right|^{x=b}_{x=a}
-\left.{_a I_x^{1-\beta}}\partial_4L(\cdots)h(x)\right|^{x=b}_{x=a}\\
+\int_a^b \left(\partial_5L(\cdots)h(a)+\partial_6L(\cdots)h(b)\right)dx=0.
\end{multline}
We first consider functions $h(t)$ such that $h(a)=h(b)=0$.
Then, by the fundamental lemma of the calculus of variations,
we deduce that
\begin{equation}
\label{eq:EL} \partial_2L(\cdots)+{_x
D_b^{\alpha}}\partial_3L(\cdots)+{_a
D_x^{\beta}}\partial_4L(\cdots)=0
\end{equation}
for all $x\in[a,b]$. Therefore, in order for $y$ to be an extremizer
to the problem \eqref{Funct1}, $y$ must be a solution of the
fractional Euler--Lagrange equation. But if $y$ is a solution of
\eqref{eq:EL}, the first integral in expression \eqref{eq:aft:IP}
vanishes, and then the condition \eqref{eq:FT} takes the form
\begin{multline*}
h(b)\left\{\int_a^b\partial_6L(\cdots)dx -\Bigl[{_a
I_x^{1-\beta}}\partial_4L(\cdots)-{_x
I_b^{1-\alpha}}\partial_3L(\cdots)\Bigl]\left.\right|_{x=b}\right\}\\
+h(a)\left\{\int_a^b \partial_5L(\cdots)dx-\Bigl[{_x
I_b^{1-\alpha}}\partial_3L(\cdots)-{_a
I_x^{1-\beta}}\partial_4L(\cdots)\Bigl]\left.\right|_{x=a}\right\}=0.
\end{multline*}
If $y(a)=y_{a}$ and $y(b)=y_{b}$ are given in the formulation of
problem \eqref{Funct1}, then the latter equation is trivially
satisfied since $h(a)=h(b)=0$. When $y(a)$ is free, then
\begin{equation*}
\int_a^b \partial_5L(\cdots)dx-\Bigl[{_x
I_b^{1-\alpha}}\partial_3L(\cdots)-{_a
I_x^{1-\beta}}\partial_4L(\cdots)\Bigl]\left.\right|_{x=a}=0,
\end{equation*}
when $y(b)$ is free, then
\begin{equation*}
\int_a^b\partial_6L(\cdots)dx -\Bigl[{_a
I_x^{1-\beta}}\partial_4L(\cdots)-{_x
I_b^{1-\alpha}}\partial_3L(\cdots)\Bigl]\left.\right|_{x=b}=0
\end{equation*}
since $h(a)$ or $h(b)$ is, respectively, arbitrary.
\end{proof}

\begin{remark}
Conditions \eqref{E-L1}--\eqref{new:bcb} are only necessary for an
extremum. The question of sufficient conditions for an extremum is
considered in Subsection~\ref{sec4}.
\end{remark}

In the case $L$ does not depend on $y(a)$ and $y(b)$, by
Theorem~\ref{Theo E-L1} we obtain the following result.

\begin{corollary}[Theorem~1 of \cite{agrawalCap}]
If $y$ is a local extremizer to problem
\begin{equation*}
 \mathcal{J}(y)=\int_a^b L(x,y(x),{^C_a D_x^{\alpha}}
y(x),{^C_xD_b^\beta}y(x)) \, dx \longrightarrow \text{extr},
\end{equation*}
then $y$ satisfies the fractional Euler--Lagrange equation
\begin{multline*}
\partial_2L(x,y(x),{^C_a D_x^{\alpha}}
y(x),{^C_xD_b^\beta}y(x))+{_x D_b^{\alpha}}
\partial_3L(x,y(x),{^C_a D_x^{\alpha}} y(x),{^C_xD_b^\beta}y(x)) \\
+ {_aD_x^\beta}\partial_4L(x,y(x),{^C_a D_x^{\alpha}}
y(x),{^C_xD_b^\beta}y(x)) = 0
\end{multline*}
for all $x\in[a,b]$. Moreover, if $y(a)$ is not specified, then
\begin{equation*}
\left[{_x I_b^{1-\alpha}}
\partial_3L(x,y(x),{^C_a D_x^{\alpha}} y(x),{^C_xD_b^\beta}y(x))
\left.-{_aI_x^{1-\beta}}\partial_4L(x,y(x),{^C_a D_x^{\alpha}}
y(x),{^C_xD_b^\beta}y(x))\right]\right|_{x=a}=0,
\end{equation*}
if $y(b)$ is not specified, then
\begin{equation*}
\left[{_aI_x^{1-\beta}}\partial_4L(x,y(x),{^C_a D_x^{\alpha}}
y(x),{^C_xD_b^\beta}y(x))
\left.-{_x I_b^{1-\alpha}}\partial_3L(x,y(x),{^C_a
D_x^{\alpha}} y(x),{^C_xD_b^\beta}y(x))\right]\right|_{x=b}=0.
\end{equation*}
\end{corollary}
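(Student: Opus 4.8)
The plan is to obtain the corollary directly from Theorem~\ref{Theo E-L1} by specializing to a Lagrangian that does not depend on its fifth and sixth arguments. Concretely, given $L = L(x,y,{^C_a D_x^\alpha}y,{^C_x D_b^\beta}y)$, I would define an auxiliary Lagrangian $\widetilde{L}(x,u_1,u_2,u_3,u_4,u_5) := L(x,u_1,u_2,u_3)$ on $[a,b]\times\R^5$, so that $\widetilde L$ trivially inherits the regularity hypothesis (i) and, since $\partial_5\widetilde L \equiv \partial_6\widetilde L \equiv 0$ and $\partial_i\widetilde L = \partial_i L$ for $i=2,3,4$, hypotheses (ii) and (iii) for $\widetilde L$ are exactly the assumed regularity of $\partial_3 L$ and $\partial_4 L$. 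The functional $\mathcal{J}$ in the corollary is then literally the functional \eqref{Funct1} built from $\widetilde L$.

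Next I would apply Theorem~\ref{Theo E-L1} to this $\widetilde L$. The Euler--Lagrange equation \eqref{E-L1} becomes, after replacing each $\partial_i\widetilde L$ by $\partial_i L$ and dropping the now-absent fifth and sixth slots, precisely the displayed Euler--Lagrange equation of the corollary. For the boundary terms: since $\partial_5\widetilde L \equiv 0$, the left-hand side of \eqref{new:bca} is $\int_a^b 0\,dx = 0$, so when $y(a)$ is free we get exactly the stated condition at $x=a$; similarly $\partial_6\widetilde L \equiv 0$ makes the left-hand side of \eqref{new:bcb} vanish, giving the stated condition at $x=b$.

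There is essentially no obstacle here — the only thing to be careful about is the bookkeeping of argument slots (checking that ``$\partial_3$ of $\widetilde L$ evaluated along $y$'' really is ``$\partial_3$ of $L$ evaluated along $y$,'' etc.) and noting that a local extremizer of $\mathcal J$ in the sense of the corollary is a local extremizer of the functional \eqref{Funct1} associated with $\widetilde L$, so Theorem~\ref{Theo E-L1} indeed applies. One could alternatively just re-run the proof of Theorem~\ref{Theo E-L1} verbatim with the $\partial_5$- and $\partial_6$-terms deleted from \eqref{eq:FT} onward; the integration-by-parts formulae \eqref{ip} and the fundamental lemma of the calculus of variations are used in exactly the same way, and the final free-endpoint discussion collapses to the two displayed equalities because the $h(a)$- and $h(b)$-coefficients no longer contain any integral term. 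I would present the short specialization argument rather than the re-derivation.
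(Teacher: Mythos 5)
Your specialization argument is exactly how the paper obtains this corollary: it is stated as an immediate consequence of Theorem~\ref{Theo E-L1} when $L$ does not depend on $y(a)$ and $y(b)$, so that $\partial_5 L\equiv\partial_6 L\equiv 0$ kills the integral terms in \eqref{new:bca}--\eqref{new:bcb}. Your proposal is correct and matches the paper's (implicit) proof.
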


We note that the generalized Euler--Lagrange equation contains both
the left and the right fractional derivative. The generalized
natural conditions contain also the left and the right fractional
integral. Although the functional has been written only in terms of
the CFDs, necessary conditions \eqref{E-L1}--\eqref{new:bcb} contain
Caputo fractional derivatives, Riemann--Liouville fractional
derivatives and Riemann--Liouville fractional integrals.

Observe that if $\alpha$ goes to $1$, then the operators ${^C_a
D_x^{\alpha}}$ and ${_a D_x^{\alpha}}$ can be replaced with
$\frac{d}{dx}$ and the operators ${^C_xD_b^\alpha}$ and
${_xD_b^\alpha}$ can be replaced with $-\frac{d}{dx}$ (see
\cite{Podlubny}). Thus, if the ${^C_xD_b^\beta}y$ term is not present in
\eqref{Funct1}, then for $\alpha \rightarrow 1$
we obtain a corresponding result in the classical context
of the calculus of variations \cite{JP:DT:AZ}
(see also \cite[Corollary~1]{MalTor}).

\begin{corollary}
If $y$ is a local extremizer for
\begin{equation*}
\begin{gathered}
\mathcal{J}(y)=\int_a^b L(x,y(x),
y'(x), y(a),y(b)) \, dx \longrightarrow \text{extr}\\
\quad (y(a)=y_{a}), \quad (y(b)=y_{b}),
\end{gathered}
\end{equation*}
then
\begin{equation*}
\frac{d}{dx}\partial_{3}L(x,y(x), y'(x), y(a),y(b))=
\partial_2L(x,y(x), y'(x), y(a),y(b))
\end{equation*}
for all $x \in [a,b]$. Moreover, if $y(a)$ is free, then
\begin{equation*}
\partial_{3}L(a,y(a), y'(a), y(a),y(b))
=\int_{a}^{b}\partial_{5}L(x,y(x), y'(x), y(a),y(b))dx;
\end{equation*}
if $y(b)$ is free, then
\begin{equation*}
\partial_{3}L(b,y(b), y'(b), y(a),y(b))
=-\int_{a}^{b}\partial_{6}L(x,y(x), y'(x),
y(a),y(b))dx.
\end{equation*}
\end{corollary}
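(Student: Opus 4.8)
The plan is to deduce this corollary as the ``classical limit'' of Theorem~\ref{Theo E-L1}, precisely along the lines sketched in the paragraph preceding the statement. First I would view the Lagrangian $L(x,y(x),y'(x),y(a),y(b))$ of the corollary as the particular instance of the Lagrangian in problem~\eqref{Funct1} in which the dependence on the right Caputo derivative ${^C_xD_b^\beta}y$ is absent, that is, $\partial_4L\equiv 0$ in the notation of Theorem~\ref{Theo E-L1}; with this convention the symbols $\partial_2L$, $\partial_3L$, $\partial_5L$, $\partial_6L$ stand for the partial derivatives of $L$ with respect to the slots $y$, $y'$, $y(a)$, $y(b)$, respectively. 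With $\partial_4L\equiv 0$, hypotheses (i)--(iii) of problem~\eqref{Funct1} reduce to $L\in C^1$ together with enough smoothness of $x\mapsto\partial_3L(\cdots)$ for ${_xD_b^\alpha}\partial_3L$ and ${_xI_b^{1-\alpha}}\partial_3L$ to be defined, while (iii) is vacuous; hence Theorem~\ref{Theo E-L1} applies and furnishes~\eqref{E-L1}, \eqref{new:bca}, \eqref{new:bcb} for such an $L$.

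Next I would invoke the limiting identities recalled in the excerpt (see \cite{Podlubny}): as $\alpha\to 1$ one has ${^C_aD_x^\alpha}y\to y'$, ${_xD_b^\alpha}\to-\tfrac{d}{dx}$, and the fractional integrals of order $1-\alpha$ tend to the identity operator, since ${_xI_b^0}$ acts as the identity on continuous functions (and similarly for the $\beta$-indexed operators, which are irrelevant here). Substituting these into the fractional Euler--Lagrange equation~\eqref{E-L1} with $\partial_4L\equiv 0$ gives $\partial_2L(x,y,y',y(a),y(b))-\tfrac{d}{dx}\partial_3L(x,y,y',y(a),y(b))=0$ on $[a,b]$, i.e.\ the asserted identity $\tfrac{d}{dx}\partial_3L=\partial_2L$. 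In the natural boundary condition~\eqref{new:bca} the term ${_aI_x^{1-\beta}}\partial_4L$ drops, ${_xI_b^{1-\alpha}}\partial_3L$ becomes $\partial_3L$ evaluated at $x=a$, and one obtains $\int_a^b\partial_5L\,dx=\partial_3L(a,y(a),y'(a),y(a),y(b))$; likewise \eqref{new:bcb} collapses to $\int_a^b\partial_6L\,dx=-\partial_3L(b,y(b),y'(b),y(a),y(b))$, which is the claimed condition when $y(b)$ is free.

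The one genuinely delicate point I would guard against is the legitimacy of passing to the limit $\alpha\to 1$ inside the conclusions of Theorem~\ref{Theo E-L1}: one needs that, under the smoothness assumptions in force, ${_xD_b^\alpha}\partial_3L\to-\tfrac{d}{dx}\partial_3L$ pointwise on $[a,b]$ and ${_xI_b^{1-\alpha}}\partial_3L\to\partial_3L$ (uniformly, say), so that each of the three relations survives the limit. These convergences are the standard facts for $C^1$ data that are being alluded to via \cite{Podlubny} just before the corollary. If one prefers to bypass any limiting argument, an equally valid and self-contained route is to repeat the proof of Theorem~\ref{Theo E-L1} \emph{verbatim} with ${^C_aD_x^\alpha}$ replaced by $\tfrac{d}{dx}$, $\partial_4L$ absent, and the classical integration-by-parts formula used in place of~\eqref{ip}: this reproduces~\eqref{eq:FT}--\eqref{eq:aft:IP} with the boundary terms ${_xI_b^{1-\alpha}}\partial_3L\,h(x)|_a^b$ replaced by $\partial_3L\,h(x)|_a^b$, whereupon the fundamental lemma of the calculus of variations (applied to $h$ with $h(a)=h(b)=0$) yields the Euler--Lagrange equation and the arbitrariness of $h(a)$, respectively $h(b)$, yields the two transversality conditions directly.
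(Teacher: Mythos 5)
Your proposal coincides with the paper's own (implicit) derivation: the corollary is obtained from Theorem~\ref{Theo E-L1} by suppressing the dependence on the right Caputo derivative and letting $\alpha\to 1$, exactly as sketched in the paragraph preceding the statement. Your additional care in justifying the passage to the limit, and the alternative self-contained variational derivation, go beyond what the paper records but are fully consistent with it.
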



\subsection{Sufficient Conditions}
\label{sec4}

In this section we prove sufficient conditions that ensure the
existence of minimum (maximum). Similarly to what happens in the
classical calculus of variations, some conditions of convexity
(concavity) are in order.

\begin{definition}
Given a function $L$, we say that $L(\underline x,y,z,t,u,v)$ is
jointly convex (concave) in $(y,z,t,u,v)$, if $\partial_i L$ ,
$i=2,\ldots,6$, exist and are continuous and verify the following
condition:
\begin{multline*}
L(x,y+y_1,z+z_1,t+t_1,u+u_1,v+v_1)-L(x,y,z,t,u,v)\\
\geq (\leq) \partial_2 L(\bullet)y_1+\partial_3 L(\bullet)z_1
+ \partial_4 L(\bullet)t_1+\partial_5 L(\bullet)u_1
+\partial_6 L(\bullet)v_1
\end{multline*}
for all $(x,y,z,t,u,v)$, $(x,y+y_1,z+z_1,t+t_1,u+u_1,v+v_1)
\in [a,b]\times\mathbb R^5$, where $(\bullet)=(x,y,z,t,u,v)$.
\end{definition}

\begin{theorem}
\label{suff}
Let $L(\underline x,y,z,t,u,v)$ be jointly convex (concave) in
$(y,z,t,u,v)$. If $y_0$ satisfies conditions
\eqref{E-L1}--\eqref{new:bcb}, then $y_0$ is a global minimizer
(maximizer) to problem \eqref{Funct1}.
\end{theorem}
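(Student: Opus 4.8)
The plan is to obtain the inequality $\mathcal{J}(y)\ge\mathcal{J}(y_0)$ directly from joint convexity, reusing the integration‑by‑parts step already performed in the proof of Theorem~\ref{Theo E-L1}. Let $y$ be an arbitrary admissible function for problem \eqref{Funct1} and put $h=y-y_0$; then $h$ is admissible, with $h(a)=0$ whenever $y(a)$ is prescribed and $h(b)=0$ whenever $y(b)$ is prescribed, while $h(a)$ (respectively $h(b)$) is an arbitrary real number when the corresponding endpoint is free. Writing $\mathcal{J}(y)-\mathcal{J}(y_0)$ as a single integral and using the linearity of ${^C_a D_x^{\alpha}}$ and ${^C_xD_b^\beta}$, I would apply the joint convexity inequality pointwise in $x$, at the base point $\bigl(x,y_0(x),{^C_a D_x^{\alpha}}y_0(x),{^C_xD_b^\beta}y_0(x),y_0(a),y_0(b)\bigr)$ with increment $\bigl(h(x),{^C_a D_x^{\alpha}}h(x),{^C_xD_b^\beta}h(x),h(a),h(b)\bigr)$. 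This gives
\[
\mathcal{J}(y)-\mathcal{J}(y_0)\ \ge\ \int_a^b\Bigl[\partial_2L(\cdots)h+\partial_3L(\cdots){^C_a D_x^{\alpha}}h+\partial_4L(\cdots){^C_xD_b^\beta}h+\partial_5L(\cdots)h(a)+\partial_6L(\cdots)h(b)\Bigr]dx,
\]
where $(\cdots)$ now denotes evaluation along $y_0$, so the right‑hand side is precisely the first‑variation expression \eqref{eq:FT}.

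Next I would apply to the third and fourth terms the fractional integration‑by‑parts formulae \eqref{ip}, exactly as in \eqref{IP}, rewriting the lower bound as
\begin{multline*}
\int_a^b\Bigl[\partial_2L(\cdots)+{_x D_b^{\alpha}}\partial_3L(\cdots)+{_a D_x^{\beta}}\partial_4L(\cdots)\Bigr]h(x)\,dx\\
+\bigl[{_x I_b^{1-\alpha}}\partial_3L(\cdots)\,h(x)\bigr]_{x=a}^{x=b}-\bigl[{_a I_x^{1-\beta}}\partial_4L(\cdots)\,h(x)\bigr]_{x=a}^{x=b}\\
+h(a)\int_a^b\partial_5L(\cdots)dx+h(b)\int_a^b\partial_6L(\cdots)dx.
\end{multline*}
Since $y_0$ satisfies the fractional Euler--Lagrange equation \eqref{E-L1} by hypothesis, the first integral vanishes identically; grouping the boundary terms together with the endpoint‑dependent integrals exactly as at the end of the proof of Theorem~\ref{Theo E-L1} leaves
\begin{multline*}
h(b)\Bigl\{\int_a^b\partial_6L(\cdots)dx-\bigl[{_a I_x^{1-\beta}}\partial_4L(\cdots)-{_x I_b^{1-\alpha}}\partial_3L(\cdots)\bigr]\big|_{x=b}\Bigr\}\\
+h(a)\Bigl\{\int_a^b\partial_5L(\cdots)dx-\bigl[{_x I_b^{1-\alpha}}\partial_3L(\cdots)-{_a I_x^{1-\beta}}\partial_4L(\cdots)\bigr]\big|_{x=a}\Bigr\}.
\end{multline*}

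It then remains to observe that this last expression equals $0$ in every admissible configuration. If $y(a)$ is prescribed then $h(a)=0$ and the second brace is multiplied by zero; if $y(a)$ is free then the bracketed quantity itself vanishes by the natural boundary condition \eqref{new:bca}. The endpoint $b$ is handled symmetrically, using $h(b)=0$ in the fixed case and \eqref{new:bcb} in the free case. Hence the lower bound reduces to $0$, so $\mathcal{J}(y)\ge\mathcal{J}(y_0)$ for every admissible $y$, i.e.\ $y_0$ is a global minimizer of \eqref{Funct1}. The concave case is identical with all inequalities reversed, yielding a global maximizer.

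I do not expect any genuine analytic obstacle: the regularity and integrability hypotheses (i)--(iii) are exactly what is needed to legitimately apply \eqref{ip} to the increment $h$, and convexity enters only through a pointwise inequality integrated over $[a,b]$. The one point requiring care is the bookkeeping over the four boundary scenarios (each of $y(a)$, $y(b)$ fixed or free), checking that in each case the boundary contribution is annihilated either because $h$ vanishes at that endpoint or because the matching natural boundary condition from Theorem~\ref{Theo E-L1} applies — which is precisely the computation already carried out at the end of that theorem's proof, now read in the reverse direction.
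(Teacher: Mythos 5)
Your argument is correct and follows essentially the same route as the paper's own proof: apply joint convexity to bound $\mathcal{J}(y)-\mathcal{J}(y_0)$ below by the first variation at $y_0$, integrate by parts via \eqref{ip}, and annihilate the resulting terms using \eqref{E-L1}--\eqref{new:bcb} together with $h(a)=0$ or $h(b)=0$ at any prescribed endpoint. Your write-up is in fact slightly more explicit than the paper's, which compresses the second half into ``proceed analogously to the proof of Theorem~\ref{Theo E-L1}.''
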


\begin{proof}
We shall give the proof for the convex case. Since $L$ is jointly
convex in $(y,z,t,u,v)$ for any admissible function $y_0+h$, we have
\begin{equation*}
\begin{split}
\mathcal{J}&(y_0+h)-\mathcal{J}(y_0)\\
&=\int_a^b \Bigl[L(x,y_0(x) + h(x),{^C_a D_x^{\alpha}}(y_0(x) +
h(x)),{^C_xD_b^\beta}(y_0(x) + h(x)),y_0(a) + h(a),y_0(b) + h(b))\Bigl.\\
&\qquad \qquad \Bigl.-L(x,y_0(x),{^C_a
D_x^{\alpha}}y_0(x),{^C_xD_b^\beta}y_0(x),y_0(a),y_0(b))\Bigl]dx\\
&\geq \int_a^b \Bigl[
\partial_2L(\star)h(x)+\partial_3L(\star){^C_a D_x^{\alpha}}h(x)
+\partial_4L(\star){^C_xD_b^\beta}h(x)+\partial_5L(\star)h(a)
+\partial_6L(\star)h(b)\Bigl]dx
\end{split}
\end{equation*}
where $(\star) = \left(x,y_0(x),{^C_a D_x^{\alpha}}
y_0(x),{^C_xD_b^\beta}y_0(x), y_0(a),y_0(b)\right)$. We can now
proceed analogously to the proof of Theorem~\ref{Theo E-L1}.
As the result we get
\begin{multline*}
\mathcal{J}(y_0+h)-\mathcal{J}(y_0)
\geq \int_a^b \Bigl[
\partial_2L(\star)+{_x
D_b^{\alpha}}\partial_3L(\star)+{_a
D_x^{\beta}}\partial_4L(\star)\Bigl]h(x)\\
+h(b)\left\{\int_a^b\partial_6L(\star)dx -\Bigl[{_a
I_x^{1-\beta}}\partial_4L(\star)-{_x
I_b^{1-\alpha}}\partial_3L(\star)\Bigl]\left.\right|_{x=b}\right\}\\
+h(a)\left\{\int_a^b \partial_5L(\star)dx-\Bigl[{_x
I_b^{1-\alpha}}\partial_3L(\star)-{_a
I_x^{1-\beta}}\partial_4L(\star)\Bigl]\left.\right|_{x=a}\right\}=0.
\end{multline*}
Since $y_0$ satisfy conditions \eqref{E-L1}--\eqref{new:bcb}, we
obtain $\mathcal{J}(y_0+h) - \mathcal{J}(y_0) \geq 0$.
\end{proof}


\subsection{Example}
\label{sec5}

We shall provide an example in order to illustrate our main results.

\begin{example}
Consider the following problem:
\begin{equation}
\label{EX} \mathcal{J}(y)=\frac{1}{2}\int_0^1 \left[({^C_0
D_x^{\alpha}}
y(x))^2+\gamma y^2(0)+\lambda(y(1)-1)^2\right]dx \longrightarrow \min\\
\end{equation}
where $\gamma,\lambda\in \R^+$. For this problem, the generalized
Euler--Lagrange equation and the natural boundary conditions (see
Theorem~\ref{Theo E-L1}) are given, respectively, as
\begin{equation}\label{Ex:el}
{_x D_1^{\alpha}}\left({^C_0 D_x^{\alpha}} y(x)\right)=0,
\end{equation}
\begin{equation}\label{Ex:ba}
\int_0^1\gamma y(0)dx=\left.{_x I_1^{1-\alpha}}({^C_0 D_x^{\alpha}}
y(x)\right)|_{x=0},
\end{equation}
\begin{equation}\label{Ex:bb}
\int_0^1\lambda (y(1)-1)dx=\left.-{_x I_1^{1-\alpha}}({^C_0
D_x^{\alpha}} y(x)\right)|_{x=1}.
\end{equation}
Note that it is difficult to solve the above fractional equations.
For $0<\alpha <1$ numerical method should be used. When $\alpha$
goes to $1$ problem \eqref{EX} tends to
\begin{equation}
\label{EX:1} \mathcal{J}(y)=\frac{1}{2}\int_0^1 \left[(
y'(x))^2+\gamma y^2(0)+\lambda(y(1)-1)^2\right]dx \longrightarrow \min\\
\end{equation}
 and equations \eqref{Ex:el}--\eqref{Ex:bb} could be replaced with
\begin{equation}\label{Ex:el:1}
y''(x)=0,
\end{equation}
\begin{equation}\label{Ex:ba:1}
\gamma y(0)=y'(0),
\end{equation}
\begin{equation}\label{Ex:bb:1}
\lambda (y(1)-1)=-y'(1).
\end{equation}
Solving equations \eqref{Ex:el:1}--\eqref{Ex:bb:1} we obtain that
\begin{equation*}
\bar{y}(x)=\frac{\gamma \lambda}{\gamma \lambda +\lambda
+\gamma}x+\frac{\lambda}{\gamma \lambda +\lambda +\gamma}
\end{equation*}
is a candidate for minimizer. Observe that problem \eqref{EX}
satisfies assumptions of Theorem~\ref{suff}. Therefore $\bar{y}$ is
a global minimizer to problem \eqref{EX:1}.
\end{example}


\section*{Acknowledgments}

The authors were partially supported by the R\&D unit CEOC, via FCT
and the EC fund FEDER/POCI 2010, and the research project
UTAustin/MAT/0057/2008. The first author was also supported
by Bia{\l}ystok University of Technology, via a project of the
Polish Ministry of Science and Higher Education ``Wsparcie
miedzynarodowej mobilnosci naukowcow''.

\smallskip

We are grateful to two anonymous reviewers for their comments.




\begin{thebibliography}{99}

\bibitem{agrawal2}
O. P. Agrawal,
Fractional variational calculus and the transversality conditions,
J. Phys. A {\bf 39} (2006), no.~33, 10375--10384.

\bibitem{agrawalCap}
O. P. Agrawal,
Generalized Euler-Lagrange equations and transversality
conditions for FVPs in terms of the Caputo derivative,
J. Vib. Control {\bf 13} (2007), no.~9-10, 1217--1237.

\bibitem{CD:Agrawal:2007}
O. P. Agrawal,
Fractional variational calculus in terms of Riesz fractional derivatives,
J. Phys. A {\bf 40} (2007), no.~24, 6287--6303.

\bibitem{R:A:D:10}
R. Almeida, A. B. Malinowska\ and\ D. F. M. Torres,
A fractional calculus of variations for multiple
integrals with application to vibrating string,
J. Math. Phys. {\bf 51} (2010),
DOI:10.1063/1.3319559
{\tt arXiv:1001.2722}

\bibitem{Almeida2}
R. Almeida\ and\ D. F. M. Torres,
Calculus of variations with fractional derivatives and fractional integrals,
Appl. Math. Lett. {\bf 22} (2009), no.~12, 1816--1820.
{\tt arXiv:0907.1024}

\bibitem{Atanackovic}
T. M. Atanackovi\'c, S. Konjik\ and\ S. Pilipovi\'c,
Variational problems with fractional derivatives: Euler-Lagrange equations,
J. Phys. A {\bf 41} (2008), no.~9, 095201, 12 pp.

\bibitem{Baleanu:Agrawal}
D. Baleanu\ and\ Om. P. Agrawal,
Fractional Hamilton formalism within Caputo's derivative,
Czechoslovak J. Phys. {\bf 56} (2006), no.~10-11, 1087--1092.

\bibitem{Baleanu}
D. Baleanu, S. I. Muslih\ and\ E. M. Rabei,
On fractional Euler-Lagrange and Hamilton equations
and the fractional generalization of total time derivative,
Nonlinear Dynam. {\bf 53} (2008), no.~1-2, 67--74.

\bibitem{int:partsRef}
R. Brunetti, D. Guido\ and\ R. Longo,
Modular structure and duality in conformal quantum field theory,
Comm. Math. Phys. {\bf 156} (1993), no.~1, 201--219.

\bibitem{JP:DT:AZ}
P. A. F. Cruz, D. F. M. Torres\ and\ A. S. I. Zinober,
A non-classical class of variational problems,
Int. J. Mathematical Modelling and Numerical Optimisation
{\bf 1} (2010), no.~3, 227--236.
{\tt arXiv:0911.0353}

\bibitem{debnath}
L. Debnath,
Recent applications of fractional calculus to science and engineering,
Int. J. Math. Math. Sci. {\bf 2003}, no.~54, 3413--3442.

\bibitem{Diethelm}
K. Diethelm\ and\ A.D. Freed,
On the Solution of Nonlinear Fractional-Order Differential Equations
Used in the Modeling of Viscoplasticity. In F. Keil, W. Mackens,
H. Vo{\ss}, J. Werther (eds.): {\it Scientific Computing in Chemical
Engineering II. Computational Fluid Dynamics, Reaction Engineering,
and Molecular Properties}. Springer-Verlag, Heidelberg, 1999, 217--224.

\bibitem{El-Nabulsi:Torres07}
R. A. El-Nabulsi\ and\ D. F. M. Torres,
Necessary optimality conditions for fractional action-like integrals of
variational calculus with Riemann-Liouville derivatives of order $(\alpha,\beta)$,
Math. Methods Appl. Sci. {\bf 30} (2007), no.~15, 1931--1939.
{\tt arXiv:math-ph/0702099}

\bibitem{R:T:08}
R. A. El-Nabulsi\ and\ D. F. M. Torres,
Fractional actionlike variational problems,
J. Math. Phys. {\bf 49} (2008), no.~5, 053521, 7 pp.
{\tt arXiv:0804.4500}

\bibitem{ferreira}
N. M. Fonseca Ferreira, F. B. Duarte, M. F. M.Lima,
M. G. Marcos\ and\ J. A. Tenreiro Machado,
Application of fractional calculus in the dynamical
analysis and control of mechanical manipulators,
Fract. Calc. Appl. Anal. {\bf 11} (2008), no.~1, 91--113.

\bibitem{Frederico:Torres07}
G. S. F. Frederico\ and\ D. F. M. Torres,
A formulation of Noether's theorem for fractional
problems of the calculus of variations,
J. Math. Anal. Appl. {\bf 334} (2007), no.~2, 834--846.
{\tt arXiv:math.OC/0701187}

\bibitem{Frederico:Torres08}
G. S. F. Frederico\ and\ D. F. M. Torres,
Fractional conservation laws in optimal control theory,
Nonlinear Dynam. {\bf 53} (2008), no.~3, 215--222.
{\tt arXiv:0711.0609}

\bibitem{G:D:10}
G. S. F. Frederico\ and\ D. F. M. Torres,
Fractional Noether's theorem in the Riesz-Caputo sense,
Appl. Math. Comput. (2010), in press.
DOI:10.1016/j.amc.2010.01.100
{\tt arXiv:1001.4507}

\bibitem{hilfer}
R. Hilfer,
Fractional diffusion based on Riemann-Liouville fractional derivatives,
J. Phys. Chem. B {\bf 104} (2000), no.~16, 3914--3917.

\bibitem{hilfer2}
R. Hilfer,
{\it Applications of fractional calculus in physics},
World Sci. Publishing, River Edge, NJ, 2000.

\bibitem{kilbas}
A. A. Kilbas, H. M. Srivastava\ and\ J. J. Trujillo,
{\it Theory and applications of fractional differential equations},
Elsevier, Amsterdam, 2006.

\bibitem{klimek}
M. Klimek,
Stationarity-conservation laws for fractional
differential equations with variable coefficients,
J. Phys. A {\bf 35} (2002), no.~31, 6675--6693.

\bibitem{kulish}
V. V. Kulish\ and\ J. L. Lage,
Application of fractional calculus to fluid mechanics,
J. Fluids Eng. {\bf 124} (2002), no.~3, 803--806.

\bibitem{magin}
R. Magin,
Fractional calculus in bioengineering. Part 1-3,
Critical Reviews in Bioengineering, {\bf 32} (2004).

\bibitem{MalTor}
A. B. Malinowska\ and\ D. F. M. Torres,
Natural boundary conditions in the calculus of variations,
Math. Methods Appl. Sci., DOI:10.1002/mma.1289
{\tt arXiv:0812.0705}

\bibitem{metzler}
F. Metzler, W. Schick, H. G. Kilian\ and\ T. F. Nonnenmacher,
Relaxation in filled polymers: A fractional calculus approach,
J. Chem. Phys. {\bf 103} (1995), 7180--7186.

\bibitem{miller}
K. S. Miller\ and\ B. Ross,
{\it An introduction to the fractional calculus and fractional differential equations},
Wiley, New York, 1993.

\bibitem{muslih}
S. I. Muslih\ and\ D. Baleanu,
Fractional Euler-Lagrange equations of motion in fractional space,
J. Vib. Control {\bf 13} (2007), no.~9-10, 1209--1216.

\bibitem{Oldham}
K. B. Oldham\ and\ J. Spanier,
{\it The fractional calculus},
Academic Press [A subsidiary of Harcourt Brace Jovanovich,
Publishers], New York, 1974.

\bibitem{Oustaloup}
A. Oustaloup, V. Pommier\ and\ P. Lanusse,
Design of a fractional control using performance contours.
Application to an electromechanical system,
Fract. Calc. Appl. Anal. {\bf 6} (2003), no.~1, 1--24.

\bibitem{Podlubny}
I. Podlubny, {\it Fractional differential equations},
Academic Press, San Diego, CA, 1999.

\bibitem{rie}
F. Riewe, Nonconservative Lagrangian and Hamiltonian mechanics,
Phys. Rev. E (3) {\bf 53} (1996), no.~2, 1890--1899.

\bibitem{samko}
S. G. Samko, A. A. Kilbas\ and\ O. I. Marichev,
{\it Fractional integrals and derivatives},
Translated from the 1987 Russian original,
Gordon and Breach, Yverdon, 1993.

\bibitem{TM}
J. A. Tenreiro Machado\ and\ R. S. Barbosa,
Introduction to the special issue on
``Fractional Differentiation and its Applications'',
J. Vib. Control {\bf 14} (2008), no.~9-10, 1253.

\bibitem{Zas}
G. M. Zaslavsky,
{\it Hamiltonian chaos and fractional dynamics},
Reprint of the 2005 original, Oxford Univ. Press, Oxford, 2008.

\end{thebibliography}
\end{document}